\documentclass[12pt]{amsart}
\usepackage{amsmath, amssymb, amstext, amsthm, eqnarray, amscd}
\usepackage{tikz}
\usepackage{tikz-cd}
\usetikzlibrary{%
  matrix,%
  calc,%
  arrows%
}
\usepackage{fullpage}

\usepackage{float}
\usepackage{mathtools}
\usepackage{mathabx}
\usepackage{graphicx}
\usepackage[mathscr]{euscript}
\usepackage{enumerate}
\usepackage{hyperref}
\usepackage{pgfkeys}
\usepackage{fullpage}
\usepackage{pst-node}
\usepackage{lastpage}
\usepackage{fancyhdr}
\usepackage{multirow}
\allowdisplaybreaks
\usepackage{graphicx}
\usepackage{tikz,color,soul}
\usepackage{chngcntr}
\usepackage{stmaryrd}
\usepackage{tabularx}

\usepackage{enumitem}
\usepackage{kantlipsum}
\usepackage{array}
\usepackage{cleveref}
\usepackage[english]{babel}
\usepackage{enumitem}
\def\multiset#1#2{\ensuremath{\left(\kern-.3em\left(\genfrac{}{}{0pt}{}{#1}{#2}\right)\kern-.3em\right)}}

\newtheorem{theorem}{Theorem}[section]

\newtheorem{cor}[theorem]{Corollary}
\theoremstyle{definition}
\newtheorem{definition}[theorem]{Definition}
\newtheorem{eg}[theorem]{Example}
\newtheorem{remark}[theorem]{Remark}

\numberwithin{equation}{section}

\newcommand{\nothing}[1][]{%
    \tikz[overlay,remember picture]{
    \draw[red]
      ($(l4)+(-0.7em,0.7em)$) rectangle
      ($(r4)+(0.5em,-0.5em)$);}
}

\title{On Reversible rings, their generalization and quasi-idempotent elements}

\author{Vivek Bhabani Lama}
   \address{Department of Mathematics, Indian Institute of Technology Kharagpur, Kharagpur, INDIA - 721302.}
   \email{ v.bhabani.lama@gmail.com, vivekbhabanilama@kgpian.iitkgp.ac.in}

\keywords{reversible rings, i-reversible rings, semicommutative rings, quasi-idempotent }
\subjclass[2020]{16U80, 16U10, 16N40} 

\date{}

\makeindex
\begin{document}

\maketitle


\section*{Abstract}
In this article, we show that the characterization of reversible rings and various other rings related to them such as i-reversible rings, semi-commutative rings and i-domains in terms of properties related to idempotent elements known in the literature can be generalized to exactly the same properties for quasi-idempotent elements. 

\section{Introduction}

For an associative ring, an element $r\in R$ is called an idempotent if $r^2=r$. Idempotent elements encode the structural information patterns in various classes of associative rings. Recently, Su et. al \cite{Liu2026} introduced the notion of a quasi-idempotent element in an associative ring. This ser of elements may be much larger than the aforementioned set of idempotent elements in a ring. 

Reversible rings were formally introduced by Cohn in 1999 (\cite{cohn1999reversible}). A ring $R$ is said to be reversible if for every $a,b\in R$, $ab=0$ then $ba=0$. Anderson and Camillo  \cite{anderson1999semigroups}, called this class of rings as $ZC_2$, while Krempa and Niewieczerzal \cite{krempa1977rings}, called them $C_0$ rings.

One of the most well celebrated characterization of reversible rings is that: A ring $R$ is reversible if and only if for every $a,b\in R$, $ab$ is an idempotent implies $ba$ is an idempotent. In our first main theorem, we extend this to the setting of quasi-idempotent elements. More precisely, we prove:
\vskip 2mm
\noindent
	{\bf Theorem A \bf (Theorem \ref{main 1}).}
 \textit{  A ring $R$ is reversible if and only if for every $a,b \in R$ such that $ab$ is a quasi-idempotent implies that $ba$ is also a quasi-idempotent.}
\vspace{2mm}

Anjana Khurana and Dinesh Khurana \cite{khurana2020reversible}, studied a new class of rings where $ab$ is a non-zero idempotent implies $ba$ is an idempotent and called them i-reversible rings. It was shown that every reversible ring is i-reversible. 
  Jung et.al \cite{jung2019reversibility}  studied the same class of rings under the terminology of quasi-reversible rings.
  In our second main theorem, we show that not only the characterization for reversible rings extends fo quasi-idempotent elements but replacing idempotents by quasi-idempotents in the definition of i-reversible rings would give rise to the same class of rings. The precise statement is as follows:
  \vskip 2mm
\noindent
	{\bf Theorem B \bf (Theorem \ref{main 2}).}
 \textit{A ring $R$ is i-reversible if and only if for every $a,b \in R$ such that $ab$ is a non-zero quasi-idempotent implies that $ba$ is also an quasi-idempotent. }
\vspace{2mm}

Finally as our last theorems, we extend characterizations of semi-commutative rings and i-domains to quasi-idempotent elements. These classes of rings are very closely related to the class of reversible rings.

\section{Reversible Rings, Other Related Rings and Quasi-reversible elements.}\label{Section S2}
 In this section, we recall the definitions of various rings that are related to reversible rings. We also provide relevant context on quasi-reversible elements of a ring. We prove that each of these rings are characterized in terms of certain properties  of quasi-idempotent elements. In particular these properties are the same ones that characterize these rings in terms of their idempotent elements.
 
 From now on, by a ring, we mean an associative ring with unity (not necessarily commutative) unless stated otherwise. We start the section with the definition of quasi-idempotent elements and its relationship with idempotent elements of the ring 
\begin{definition}\label{DEF: quasi idempotent}
Let $R$ be a ring, an element $e \in R$ is said to be a quasi-idempotent element of $R$ if $e^2=ke$ where $k$  is a central unit in $R$. 

\end{definition}
\begin{remark} \label{remark on anti-commutator}
    It is clear that every idempotent element is a quasi idempotent element with $k=1.$ The trivial idempotents $0,1$ are also trivial quasi-idempotents in the ring $R$.
\end{remark}

We now provide an example to show that every quasi-idempotent element need not be an idempotent.

\begin{eg}
Let $R$ be a ring and $e$ be an idempotent in $R$. Also let $k$ be a central unit in $R$ such that $e\neq ke$. Then, $x=ek$ is a quasi-idmpotent which is not an idempotent.
\end{eg}

We now prove the characterization of reversible rings in terms of quasi-idempotent elements. In order to do this, we provide context and definitions, wherever necessary.

\begin{definition}
    A ring $R$ is said to be reversible if for every $a,b \in R$ such that $ab=0$ implies $ba=0$.
\end{definition}

The following theorem is one of the most well celebrated characterization of reversible rings is in terms of its idempotent elements (see  for details). 

\begin{theorem}\label{Char of rev rings}\cite[Proposition 1.4.]{Jung2015}
    A ring $R$ is reversible if and only if for every $a,b \in R$ such that $ab$ is an idempotent implies that $ba$ is also an idempotent.
\end{theorem}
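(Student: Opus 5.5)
The plan is to prove the two implications separately. The forward one is a direct computation. For the converse, the idea is to apply the hypothesis to the pairs $(ra,b)$, which show that every element of $bRa$ is idempotent; the relation $ab=0$ will then force those idempotents to be square-zero.

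\emph{Reversible implies the idempotent condition.} Suppose $R$ is reversible and $ab$ is an idempotent. From $(ab)^2=ab$ we get $a(bab-b)=abab-ab=0$. Reversibility applied to the pair $a$ and $bab-b$ then gives $(bab-b)a=0$, that is, $baba=ba$. Hence $ba$ is an idempotent. No further input is needed here.

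\emph{The idempotent condition implies reversible.} Assume that $ab$ idempotent always forces $ba$ idempotent, and let $ab=0$; we want $ba=0$.
\begin{enumerate}
\item For an arbitrary $r\in R$ we have $(ra)b=r(ab)=0$, and $0$ is an idempotent.
\item The hypothesis applied to the pair $(ra,b)$ therefore gives that $b(ra)=bra$ is an idempotent for every $r\in R$. So every element of $bRa$ is idempotent.
\item For any $r,s\in R$, the relation $ab=0$ kills the product:
\[
(bra)(bsa)=br(ab)sa=0 .
\]
\item Taking $s=r$ gives $(bra)^2=0$. Since $bra$ is also idempotent, $bra=(bra)^2=0$.
\item Putting $r=1$ yields $ba=0$, so $R$ is reversible.
\end{enumerate}

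The main obstacle is the converse direction. Using only the single pair $(a,b)$, the hypothesis yields just that $ba$ is idempotent, which is not enough. Scaling $b$ by integers only produces relations such as $2ba=0$, which do not force $ba=0$. The key step is to feed the hypothesis the whole family of pairs $(ra,b)$, $r\in R$, because it is the product structure $(bra)(bsa)$ inside $bRa$ that lets $ab=0$ act.
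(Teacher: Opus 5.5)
Your proof is correct and is essentially the argument the paper uses for Theorem~\ref{main 1}; the paper only cites this statement, but that proof specializes to it at $k=1$. The forward direction is identical, and your converse at $r=1$ is exactly the paper's converse, so the passage through all of $bRa$ is unnecessary and your closing claim that the single pair $(a,b)$ is not enough is mistaken: $ba$ idempotent and $(ba)^2=b(ab)a=0$ already give $ba=(ba)^2=0$.
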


We now characterize this property of reversibility in terms of quasi-idempotent elements. What turns out to be remarkable is that the same property for quasi-idempotent elements as in \Cref{Char of rev rings} is exactly required to capture the reversibility of the ring $R$.

\begin{theorem} \label{main 1}
    A ring $R$ is reversible if and only if for every $a,b \in R$ such that $ab$ is a quasi-idempotent implies that $ba$ is also a quasi-idempotent.
\end{theorem}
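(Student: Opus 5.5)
The plan is to verify both directions directly from Definition \ref{DEF: quasi idempotent}, mirroring the proof of \Cref{Char of rev rings}. The identity driving both directions is
\[
(ab)^2 - k\,ab = a\,(bab - kb), \qquad (ba)^2 = b\,(ab)\,a .
\]
We use that $k$ is a central unit.

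For the forward direction, assume $R$ is reversible and $ab$ is a quasi-idempotent, so $(ab)^2 = k\,ab$ for some central unit $k$. Set $c = bab - kb$. The first identity then says $a c = (ab)^2 - k\,ab = 0$. Reversibility gives $c a = 0$, that is, $baba - kba = 0$, where we have used that $k$ is central to write $kb\,a = k\,ba$. Hence $(ba)^2 = k\,ba$ with the \emph{same} central unit $k$, so $ba$ is a quasi-idempotent.

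For the converse, assume the quasi-idempotent transfer property and let $ab = 0$. By Remark \ref{remark on anti-commutator}, $0$ is a quasi-idempotent (take $k=1$), so $ba$ is a quasi-idempotent. Thus $(ba)^2 = k\,ba$ for some central unit $k$. On the other hand, the second identity gives $(ba)^2 = b(ab)a = 0$. Therefore $k\,ba = 0$, and multiplying by $k^{-1}$ yields $ba = 0$. So $R$ is reversible.

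There is no serious obstacle here. The only points needing care are that $k$ is central, which lets it pass from $kb\,a$ to $k\,ba$ in the forward direction, and that $k$ is a unit, which lets us cancel it in the converse. The invertibility of $k$ is exactly what makes the converse work for quasi-idempotents, just as $k=1$ does in \Cref{Char of rev rings}.
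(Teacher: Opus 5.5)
Your proof is correct and is essentially the paper's argument: your $c=bab-kb$ is the paper's $b(ab-k)$, reversibility applied to $a\cdot c=0$ gives $(ba)^2=k\,ba$, and the converse uses $(ba)^2=b(ab)a=0$ and cancels the unit $k$ exactly as the paper does. One small bookkeeping remark: in your formulation centrality is what gives $a(kb)=k(ab)$ in your first identity, whereas $kb\,a=k\,ba$ holds by associativity alone.
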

\begin{proof}
  Suppose that $R$ is a reversible ring and let $a,b \in R$ such that $ab$ is a quasi-idempotent element in $R$. Therefore, we have the following equations:
\begin{align*}
    &(ab)^2=k( ab) \text{ for some central unit } k \in R.\\
    &\implies ab(ab-k)=0.
\end{align*}
Using the fact that $R$ is reversible, we get $b(ab-k)a=0$ which can be further reduced to $baba-bka=0.$ Now we use the fact that the element $k$ is central in $R$ and hence $baba-kba=0$. This is exactly $(ba)^2=k(ba)$ and hence, $ba$ is a quasi-idempotent as required.
Conversely, suppose that the ring $R$ satisfies the property that for every $a,b \in R$ such that $ab$ is a quasi-idempotent implies that $ba$ is also a quasi-idempotent. Now consider $a,b\in R$ such that $ab=0$. Since $ab=0$ it is trivially a quasi-idempotent (see \ref{remark on anti-commutator}). Therefore, $ba$ is also a quasi-idempotent by our hypothesis. Hence, there exists a central unit $k \in R$ such that $(ba)^2=k(ba)$. However since $ab=0$, this reduces to $0=(ba)^2=k(ba)$. Hence $k(ba)=0$ gives $ba=0$ since $k$ is a unit in $R$. Therefore the ring $R$ is reversible as desired.
\end{proof}

An interesting class of rings that was introduced from the notion of reversible rings is the class of i-reversible rings. These rings were first introduced in \cite{khurana2020reversible} and independently in \cite{jung2019reversibility} under the nomenclature of quasi-reversible rings. We now show that the i-reversible property of a ring is captured by quasi-idempotent elements. This creates an analogous result to \Cref{main 1}. We first recall the definition of i-reversible rings and follow the nomenclature as i-reversible rings throughout the article.

\begin{definition}
     A ring $R$ is i-reversible if for every $a,b \in R$ such that $ab$ is a non-zero idempotent implies that $ba$ is also an idempotent.
\end{definition}
\begin{theorem}\label{main 2}
    A ring $R$ is i-reversible if and only if for every $a,b \in R$ such that $ab$ is a non-zero quasi-idempotent implies that $ba$ is also an quasi-idempotent. 
\end{theorem}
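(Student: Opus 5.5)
The plan is to prove both implications directly from the definitions. In each direction the central unit $k$ is either rescaled away or shown to act trivially. Centrality of $k$ (and hence of $k^{-1}$) is what lets it move freely through the products $ab$ and $ba$.

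\emph{Forward direction.} Suppose $R$ is i-reversible, and let $ab$ be a non-zero quasi-idempotent with $(ab)^2=k(ab)$ for a central unit $k$.
\begin{itemize}
\item Set $e=k^{-1}ab$. Since $k^{-1}$ is central, $e^2=k^{-2}(ab)^2=k^{-2}k\,ab=e$. So $e$ is an idempotent, and it is non-zero because $k^{-1}$ is a unit and $ab\neq 0$.
\item Write $e=(k^{-1}a)b$. By i-reversibility, $b(k^{-1}a)=k^{-1}ba$ is an idempotent.
\item Expanding, $k^{-2}(ba)^2=k^{-1}ba$. Multiplying by $k^2$ gives $(ba)^2=k(ba)$, so $ba$ is a quasi-idempotent with the same $k$.
\end{itemize}

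\emph{Converse direction.} Assume the quasi-idempotent property, and let $ab$ be a non-zero idempotent. By Remark \ref{remark on anti-commutator}, $ab$ is a non-zero quasi-idempotent with constant $1$. The hypothesis then gives $(ba)^2=k(ba)$ for some central unit $k$, and we must upgrade this to $(ba)^2=ba$. This is the only step needing an idea; I would compute $(ba)^3$ in two ways.
\begin{itemize}
\item Using $abab=ab$, we get $(ba)^3=b(abab)a=b(ab)a=(ba)^2=k\,ba$.
\item Using the quasi-idempotent relation twice, $(ba)^3=k(ba)^2=k^2\,ba$.
\item Comparing, $k^2ba=k\,ba$. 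Cancelling the unit $k$ gives $k\,ba=ba$, hence $(ba)^2=k\,ba=ba$.
\end{itemize}
So $ba$ is an idempotent and $R$ is i-reversible.

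I expect no serious obstacle. The one point needing care is the converse, where the constant $k$ produced by the hypothesis has to be shown to act trivially on $ba$; the cube computation above handles this using only that $k$ is a central unit.
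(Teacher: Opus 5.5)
Your proposal is correct and follows the paper's proof step for step: in the forward direction you rescale $a$ by $k^{-1}$ to get a non-zero idempotent, and in the converse you compute $(ba)^3$ in two ways to force $k\,ba=ba$. Your intermediate equalities, namely $(k^{-1}ab)^2=k^{-1}ab$ and $(ba)^3=(ba)^2=k\,ba$, are stated more accurately than the paper's displayed chains, which contain small slips such as ending with $k^{-1}ab=ab$ and writing $(ba)^2=k^2(ba)$.
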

\begin{proof}
Suppose that $R$ is an i-reversible ring and let $a,b \in R$ such that $ab$ is a non-zero quasi-idempotent. This gives that $(ab)^2=k(ab)$ for some central unit $k \in R$. Set $a'=k^{-1}a\in R$ and consider the following equalities:
\begin{align*}
    (a'b)^2&=(k^{-1})^2(ab)^2\\
    &=(k^{-1})^2(kab)\\
    &=k^{-1}ab\\
    &=ab.
\end{align*}
Therefore, $a'b$ is an idempotent in $R$. Also, $a'b=k^{-1}ab$ is a non-zero idempotent in $R$ for if otherwise, $ab=0$ which is not true due to our hypothesis. Therefore, using the fact that our ring $R$ is an i-reversible ring, we have that $(ba')^2=ba$ which gives rise to the following equalities:
\begin{align*}
    (ba')^2=&( ba')\\
    \implies (bk^{-1}a)^2=& \ bk^{-1}a\\
    \implies k^{-1}(ba)^2=& \ ba
\end{align*}
This finally gives $(ba)^2=k(ba)$as required. Conversely, suppose that the ring $R$ satisfies the property that for every $a,b \in R$ such that $ab$ is a non-zero quasi-idempotent implies that $ba$ is also an quasi-idempotent. Consider $a,b \in R$ such that $ab$ is a non-zero idempotent. Since $ab$ is a non-zero idempotent, by Remark \ref{remark on anti-commutator} $ab$ is also a non-zero quasi-idempotent. Therefore, $(ba)^2=k(ba)$ for some central unit $k\in R.$ Now consider the following expression for the element $(ba)^3 \in R:$
\begin{align*}
    (ba)^3&=b(ab)^2a\\
    &=b(ab)a\\
    &=(ba)^2\\
    &=k^2(ba).
\end{align*}
On the other hand, we also have that $(ba)^3=(ba)(ba)^2=(ba)(k(ba))=k(ba)^2=k^2(ba).$ Combining the two equalities for the element $(ba)^3$ we get $k^2(ba)=k(ba)$ which reduces to $kba=ba$. Since $(ba)^2=kba$, we finally get $(ba)^2=ba$ thereby completing the proof.
\end{proof}

Another class of rings that are closely associated to reversible rings is the semi-commutative rings. A  characterization of semi-commutative rings via properties related to idempotents has been established recently in \cite{SaadZailaee2026}. We provide the necessary details about semi-commutative rings and then prove our next main result (\Cref{main 3}). also recovers the characterization in \cite{SaadZailaee2026}.

\begin{definition}
    A ring $R$ is called a semi-commutative ring if for every $a,b \in R$ such that $ab=0$, we have $aRb=0$.
\end{definition}
\begin{theorem}\label{main 3}
  A ring $R$  is semi-commutative if and only if for every $a,b \in R$ such that $ab$ is a quasi-idempotent with $(ab)^2=k(ab)$ for some central unit $k \in R$, we have that $(k-ab)aRb(k-ab)=0.$
\end{theorem}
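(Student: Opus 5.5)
For the forward direction, assume $R$ is semi-commutative and let $a,b\in R$ with $(ab)^2=k(ab)$ for a central unit $k$. The plan is to obtain zero products involving $ab$ and $k-ab$, and then use semi-commutativity to insert arbitrary ring elements between the factors. First I will record the identities
\[
ab(ab-k)=0 \quad\text{and}\quad (ab-k)ab=0,
\]
which hold because $k$ is central. The key observation is that the expression $(k-ab)aRb(k-ab)$ contains $(k-ab)a$ on the left and $b(k-ab)$ on the right, and their product vanishes:
\[
(k-ab)a\cdot b(k-ab)=(k-ab)ab(k-ab)=0,
\]
since $(k-ab)ab=kab-(ab)^2=0$. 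Applying the semi-commutativity definition to the pair $x=(k-ab)a$ and $y=b(k-ab)$ gives $xRy=0$, which is exactly $(k-ab)aRb(k-ab)=0$. This direction is then finished.

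For the converse, assume the stated property and let $ab=0$. By Remark \ref{remark on anti-commutator}, $ab=0$ is a quasi-idempotent. We may take $k=1$, since $0^2=1\cdot 0$ and $1$ is a central unit. The hypothesis then yields $(1-0)aRb(1-0)=aRb=0$, so $R$ is semi-commutative.

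One subtlety needs care: in the hypothesis, $k$ is the given central unit witnessing quasi-idempotence. In the converse we are free to choose $k=1$, because the statement quantifies over all $a,b$ and all $k$ with $(ab)^2=k(ab)$. I will state this explicitly. Alternatively, any central unit $k$ works, since $(k)aRb(k)=0$ implies $aRb=0$ after multiplying by $k^{-1}$ on both sides.

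I expect no real obstacle. The only point to get right is spotting that $(k-ab)ab=0$ gives the needed zero product $\bigl((k-ab)a\bigr)\bigl(b(k-ab)\bigr)=0$.
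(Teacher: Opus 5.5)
Your proof is correct and follows the paper's argument. The only difference is minor: the paper applies semi-commutativity to the pair $(k-ab)a$ and $b$, obtaining the slightly stronger $(k-ab)aRb=0$ before right-multiplying by $k-ab$, whereas you apply it directly to $(k-ab)a$ and $b(k-ab)$; for the converse, the paper cancels an arbitrary central unit $k$, exactly as in your alternative remark.
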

\begin{proof}
    Suppose that $R$ is semi-commutative and let $a,b \in R$ such that $(ab)^2=k(ab)$ for some central unit $k \in R.$ This automatically gives $(k-ab)ab=0$. Using the semi-commutative property of $R$ we get $(k-ab)aRb=0$ and hence $(k-ab)aRb(k-ab)=0.$ Conversely, suppose that $ab=0$ then $(ab)^2=k(ab)$ for any non trivial central idempotent $k\in R$. We pick any such central unit $k$ and by our hypothesis we get $kaRbk=0$ but since $k$ is a unit in $R$, this reduces to $aRb=0$. Therefore $R$ is semi-commutative.
\end{proof}
The above theorem immediately recovers the characterization of semi-commutative rings in terms of idempotent elements.

\begin{cor}\cite[Theorem 1.]{SaadZailaee2026}
    A ring $R$  is semi-commutative if and only if for every $a,b \in R$ such that $ab$ is an idempotent, we have that $(k-ab)aRb(k-ab)=0.$
\end{cor}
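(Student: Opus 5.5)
The plan is to read the corollary as the special case $k=1$ of \Cref{main 3}. For an idempotent $ab$, the natural central unit witnessing quasi-idempotency is $k=1$, as in Remark \ref{remark on anti-commutator}. The condition to be proved is therefore $(1-ab)aRb(1-ab)=0$. I would prove the two implications separately rather than quote the full equivalence of \Cref{main 3}. The reason is that \Cref{main 3} quantifies over all quasi-idempotents and all admissible $k$, whereas the corollary only quantifies over idempotents.

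For the forward direction, assume $R$ is semi-commutative and $ab$ is an idempotent. By Remark \ref{remark on anti-commutator}, $ab$ is a quasi-idempotent with $(ab)^2=1\cdot(ab)$. The forward implication of \Cref{main 3}, applied with the central unit $k=1$, gives $(1-ab)aRb(1-ab)=0$ at once. One could also argue directly: $(1-ab)ab=0$, so semi-commutativity yields $(1-ab)aRb=0$, and right multiplication by $(1-ab)$ finishes the argument.

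For the converse, assume the stated condition holds for every pair $a,b$ with $ab$ idempotent, and let $ab=0$. Then $ab$ is the trivial idempotent $0$, so the hypothesis applies and gives $(1-0)aRb(1-0)=0$, that is, $aRb=0$. Hence $R$ is semi-commutative.

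The only point needing care is interpretive rather than computational: the symbol $k$ in the corollary has to be fixed as $1$. With that reading both directions are immediate, and no central unit ever has to be inverted. This is slightly cleaner than the converse in \Cref{main 3}, where $k$ had to be cancelled using that it is a unit.
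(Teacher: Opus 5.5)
Your proposal is correct and matches the paper's proof, which just specializes the proof of \Cref{main 3} to $k=1$. That specialization is exactly your forward argument via $(1-ab)ab=0$ and your converse using the trivial idempotent $ab=0$.
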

\begin{proof}
    The proof is direct by setting $k=1$ in the proof of \Cref{main 3}.
 \end{proof}

Finally, we prove our last theorem regarding the characterization of i-domains, introduced in \cite{SaadZailaee2026}. These are generalizations of domains in terms of idempotent elements. We show that a ring being an i-domain is equivalent to it satisfying the same properties for quasi-idempotent elements.

 \begin{definition}
     A ring $R$ is called an i-domain if for every idempotent $ab \in R$ for some $a,b \in R$ we have either $a\in I(b)$ or $b \in I(a)$ where $I(a) $ and $I(b)$ represent the two sided ideals generated by $a$ and $b $ respectively.
 \end{definition}

 \begin{theorem}
     A ring $R$ is an i-domain if and only if for every quasi-idempotent $ab \in R$ for some $a,b \in R$ we have either $a\in I(b)$ or $b \in I(a)$.
 \end{theorem}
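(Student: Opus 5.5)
We prove the two implications separately, following the rescaling trick used in the proof of \Cref{main 2}.

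\emph{The quasi-idempotent condition implies the i-domain condition.} This direction is immediate. By Remark \ref{remark on anti-commutator}, every idempotent is a quasi-idempotent with $k=1$. So if $ab$ is an idempotent, the hypothesis applies to the same pair $a,b$ and gives $a\in I(b)$ or $b\in I(a)$.

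\emph{The i-domain condition implies the quasi-idempotent condition.} Let $R$ be an i-domain and suppose $(ab)^2=k(ab)$ with $k$ a central unit. Put $a'=k^{-1}a$. Since $k^{-1}$ is central, the computation in the proof of \Cref{main 2} gives
\[
(a'b)^2=k^{-2}(ab)^2=k^{-1}ab=a'b,
\]
so $a'b$ is an idempotent. Note that this step does not need $ab\neq 0$. Applying the i-domain property to the pair $a',b$ gives $a'\in I(b)$ or $b\in I(a')$. It remains to transfer these memberships back to $a$.

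The key observation is that $I(a')=I(a)$. Indeed, $a'=k^{-1}a\in I(a)$ and $a=ka'\in I(a')$, because two-sided ideals are closed under multiplication by ring elements, here $k$ and $k^{-1}$. Hence $b\in I(a')$ is the same as $b\in I(a)$. If instead $a'\in I(b)$, then $a=ka'\in I(b)$, since $I(b)$ is an ideal. Either way we obtain $a\in I(b)$ or $b\in I(a)$, as required.

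There is no real obstacle. The only points to check are that rescaling by the central unit $k^{-1}$ turns $ab$ into an idempotent, and that it leaves the generated two-sided ideals unchanged. Both follow from $k$ being a central unit.
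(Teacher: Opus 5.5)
Your proof is correct and follows the paper's argument: you rescale to $a'=k^{-1}a$ so that $a'b$ is idempotent, apply the i-domain property, and transfer back, and you get the converse from the fact that every idempotent is a quasi-idempotent. You also spell out the transfer step the paper leaves implicit, namely that $I(k^{-1}a)=I(a)$ and that $a=ka'\in I(b)$ whenever $a'\in I(b)$.
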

 \begin{proof}
     Suppose that $ab$ is a quasi-idempotent elements in $R$. There exists a central unit $k \in R$ such that $(ab)^2=kab$, we have that $(a'b)$ is an idempotent element, where $a'=k^{-1}a$. Using the fact that $R$ is an i-domain, either $a'\in I(b)$ or $b \in I(a')$ which in turn implies that either $a\in I(b)$ or $b \in I(a)$.
     The converse is obvious, considering the fact that every idempotent is a quasi-idempotent.
 \end{proof}
 \section*{Acknowledgments}
 The author is thankful to the Government of India for supporting him in this work through the Prime Minister Research Fellowship. GPT-5.6 Luna was used for valuable discussions throughout the preparation of this article.

\section*{Declarations}
\subsection*{Data Availability}
No data was used for the research described in this note.
 
 \bibliographystyle{abbrv}
\bibliography{refs}
\end{document}